\newtheorem{theorem}{Theorem}[section]
\newtheorem{proposition}[theorem]{Proposition}
\newtheorem{remark}[theorem]{Remark}
\DeclareMathOperator{\supp}{supp}
\newcommand{\R}{\mathbb{R}}
\title{Fractional Laplacian in bended strip}
\author{Fedor Bakharev\thanks{St.Petersburg State University, Universitetskaya emb. 7-9, St.Petersburg, 199034, Russia, e-mail: f.bakharev@spbu.ru}
\ and
Sergey Matveenko\thanks{Chebyshev Laboratory, St. Petersburg State University, 14th Line V.O., 29, Saint Petersburg 199178 Russia, e-mail: matveis239@gmail.com}
}
\begin{document}

\maketitle

\noindent{\bf Abstract.} 
The spectral properties of the restricted fractional Laplacian with Dirichlet boundary conditions in a smoothly bent waveguide is investigated. The existence of eigenvalues below the threshold of the continuous spectrum is proved, generalizing classical results known for the local Laplace operator. Our approach utilizes the Caffarelli--Silvestre extension, addressing the specific geometric difficulties arising from the operator non-locality. The sufficient conditions on the curvature magnitude and distribution to ensure the existence of these trapped modes is established.

\medskip

\noindent{\bf Keywords:} restricted fractional Laplacian, waveguide, Dirichlet spectrum

\medskip

\noindent{\bf AMS classification codes:} Primary: 35R11, Secondary: 81Q10.

\section{Introduction}

The primary objective of this work is to generalize the classical spectral theory of Dirichlet Laplacians in bent waveguides to the setting of non-local operators, specifically the fractional Laplacian.

For the standard Dirichlet Laplacian in quantum waveguides (domains with cylindrical outlets to infinity) a rich theory exists describing how geometric deformations affect the spectrum. 
It is well-known that a straight quantum waveguide possesses a purely continuous spectrum bounded from below by the first eigenvalue of the cross-section. However, geometric perturbations such as bending, twisting, or local enlargement can 
induce 
discrete eigenvalues below this threshold, representing "trapped modes" or bound states. The foundational works in this area (see, e.g., \cite{DuEx1995, ExSe89, GoJa1992, ESS}) established that for purely bent waveguides, eigenvalues emerge regardless of the smallness of the curvature. Conversely, twisting the waveguide often creates a repulsive effect that may prevent the existence of bound states \cite{EK, GoJa1992}.

In contrast, the spectral theory for non-local operators in unbounded domains is significantly less developed, despite their growing importance in physics and probability. The fractional Laplacian $(-\Delta)^\alpha$ naturally arises in the study of relativistic quantum mechanics, where it describes the kinetic energy of particles with negligible mass \cite{CaMaSi90, Nardini86}. From a stochastic perspective, these operators generate L\'evy processes, which generalize Brownian motion by allowing for discontinuous paths (jumps), capturing "heavy-tailed" phenomena in anomalous diffusion \cite{GaSt19, Valdinoci2009}.

Recent progress has been made regarding the fractional Dirichlet problem in unbounded domains. In \cite{BaNa2023JST}, Bakharev and Nazarov characterized the essential spectrum of the fractional Laplacian in domains with cylindrical outlets, proving stability results analogous to the local case. Furthermore, in the case of "broken" or V-shaped waveguides, it was recently shown in \cite{BaMa24} that an eigenvalue emerges below the essential spectrum threshold even for angles arbitrarily close to $\pi$. This result relies heavily on the singularity of the boundary at the corner.

A major ambiguity in the non-local setting is the definition of the Dirichlet condition. One approach defines the operator as the spectral power $(-\Delta_\Omega)^\alpha$ of the conventional Dirichlet Laplacian. This, however, reduces the spectral analysis back to the standard operator. 
Another approach adopted in this paper is the \textit{restricted} fractional Laplacian $\mathcal{A}^\Omega_\alpha$. The operator is defined via the quadratic form:
\[
a_\alpha^\Omega[u] = (\mathcal{A}^\Omega_\alpha u, u) := \int_{\mathbb{R}^n} |\xi|^{2\alpha} |\mathcal{F}_n u(\xi)|^2 \, d\xi,
\]
where $\alpha\in(0, 1)$ and $\mathcal{F}_n$ denotes the $n$-dimensional Fourier transform
\[
\mathcal{F}_n u(\xi) = \frac{1}{(2\pi)^{n/2}} \int_{\mathbb{R}^n} e^{-i \xi \cdot x} u(x) \, dx.
\]
The domain of the form is the space of functions in the global Sobolev space $H^\alpha(\mathbb{R}^n)$ supported within $\overline{\Omega}$:
\[
\operatorname{Dom}(a_\alpha^\Omega) = \widetilde{H}^\alpha(\Omega) := \left\{ u \in H^\alpha(\mathbb{R}^n) : \operatorname{supp} u \subset \overline{\Omega} \right\}.
\]

In this paper, we investigate the existence of the discrete spectrum for $\mathcal{A}^\Omega_\alpha$ in a smoothly bent waveguide. 
The standard technique for analyzing the fractional Laplacian is the Caffarelli--Silvestre extension \cite{CaSi}, which maps the non-local problem in $\mathbb{R}^n$ to a local problem in the half-space $\mathbb{R}^{n+1}_+$.
For the V-shaped waveguide, one can construct this extension on parallel cross-sections. However, for a curved waveguide, one must utilize cross-sections orthogonal to the waveguide axis. The crucial difficulty is that the extensions of these cross-sections into the extra dimension intersect if the curvature radius is small relative to the extension depth. This intersection implies that the "local" extension variables become entangled, reflecting the non-local interaction between distant parts of the waveguide.

Our main result demonstrates that discrete eigenvalues exist for the restricted fractional Laplacian in a bent waveguide 
under specific geometric constraints.
Unlike classical results which often rely solely on the smallness of the curvature $\|\kappa\|_\infty$, our analysis requires two parameters: one controlling the magnitude of the curvature and another controlling its support (distribution) along the waveguide. By carefully estimating the interaction strength between cross-sections in the extended space, we construct a trial function for the Rayleigh quotient that falls below the continuous spectrum threshold.

The paper is organized as follows. Section~\ref{sec:Caff-Sil} reviews the necessary tools regarding the Caffarelli--Silvestre extension. Section~\ref{sec:geometry} provides a detailed description of the waveguide geometry and the coordinate systems used. Section~\ref{sec:CSenergy} derives the auxiliary energy estimates required to handle the geometric overlap of the extensions. Finally, Section~\ref{sec:existence} contains the proof of the main result: the existence of an eigenvalue below the threshold of the essential spectrum.

\section{Caffarelli -- Silvestre extension}
\label{sec:Caff-Sil}

The connection between fractional differential operators and generalized harmonic extensions was established in \cite{MO} and became  popular due to the work \cite{CaSi}. Specifically, for a function $u$ belonging to $\widetilde{H}^\alpha(\Omega)$, the function
\begin{equation*}
    \label{def-Caf-Sil}
U(x, y) = \int_{\mathbb R^n}{\mathcal P}_\alpha(x - \widetilde{x}, y)u(\widetilde{x})\,d\widetilde{x}
\end{equation*}
with the generalized Poisson kernel
\begin{equation*}
    \label{def-Poisson-kernel}
{\mathcal P}_\alpha(x, y) = \frac{\Gamma\left(\frac{n+2\alpha}2\right)}{\pi^{\frac  n2}\Gamma(\alpha)}\frac{y^{2s}}{(|x|^2 + y^2)^{\frac n2 + \alpha}}
\end{equation*}
is called the Caffarelli -- Silvestre extension of $u$. The function $U$ is a minimizer of the weighted Dirichlet integral 
\begin{equation*}
    \label{def-Dir-int}
{\mathcal E}_\alpha^\Omega(W) = \int_0^\infty\int_{\mathbb R^n}y^{1-2\alpha}|\nabla W(x, y)|^2dx dy
\end{equation*}
over the set 
\begin{equation*}
        {\mathcal W}_\alpha^\Omega(u) = \{W = W(x, y)\colon {\mathcal E}_\alpha^\Omega(W) < \infty,\ W|_{y=0}=u\}.
\end{equation*}
Moreover, the Caffarelli -- Silvestre extension is a solution of the boundary problem
\begin{equation*}
\label{def-Caf-Sil-prolem}
    - \mathop{\rm div}\nolimits y^{1-2\alpha}\nabla U = 0, \qquad U(x, 0) = u(x).
\end{equation*}
The following identity provide the Dirichlet fractional Laplacian of $u$ via its Caffarelli -- Silvestre extension
\begin{equation*}
    {\mathcal A}_\alpha^\Omega u = - c_\alpha\lim\limits_{y\to0+} y^{1-2\alpha}\partial_y U(\cdot, y), \quad \text{where} \quad c_\alpha = \frac{4^\alpha\Gamma(\alpha+1)}{2\alpha\Gamma(1-\alpha)}.
\end{equation*}
At  the same time the weighted Dirichlet integral is proportional to the quadratic form
\begin{equation*}
    \label{def-form-via-energy-repr}
    a_\alpha^\Omega[u]  = c_\alpha{\mathcal E}_\alpha^\Omega(U)
\end{equation*}
and hence the spectral problem for the eigen pair $(\lambda, u)$ of the Dirichlet restricted Laplacian admits the following variational statement:
\begin{equation*}
     c_\alpha\int_0^\infty\int_{\mathbb R^n}y^{1-2\alpha}\nabla U(x, y) \nabla V(x, y) dx dy = \lambda \int_{\R^n} u(x) v(x) dx, \quad \forall v \in \widetilde{H}^\alpha(\Omega),
\end{equation*}
where $U$ and $V$ are the Caffarelli -- Silvestre extensions of $u$ and $v$, respectively.

\section{Geometry of the bent strip and tubular coordinates}
\label{sec:geometry}

We work in \(\R^2\) with tubular coordinates \(x=(s,n)\) built around a reference curve.  
Let \(\gamma\in C^2(\R;\R^2)\) be an arc–length parametrized curve, \(|\dot\gamma|=1\). Set
\[
\tau(s)=\dot\gamma(s),\qquad 
\nu(s)=(-\dot\gamma_2(s),\dot\gamma_1(s)),\qquad 
\kappa(s)=\dot\gamma_2(s)\ddot\gamma_1(s)-\dot\gamma_1(s)\ddot\gamma_2(s),
\]
so that the Frenet formulas hold
\[
\dot\tau(s)=-\kappa(s)\,\nu(s),\qquad \dot\nu(s)=\kappa(s)\,\tau(s),
\]
and \((\tau,\nu)\) is an orthonormal positively oriented frame.

Assume
\begin{equation*}\label{eq:assumptions}
\kappa\in C^1_c(\R),\qquad \supp\kappa\subset[-\ell,\ell],
\end{equation*}
and choose \(\rho>a>0\) so that the normal coordinate map
\[
\mathscr X:\Pi_\rho:=\R\times(-\rho,\rho)\to\R^2,\qquad 
\mathscr X(s,n)=\gamma(s)+n\,\nu(s),
\]
is a \(C^2\) diffeomorphism onto its image \(\Omega_\rho:=\mathscr X(\Pi_\rho)\).
A convenient sufficient condition excluding focal points is
\[
0<\rho<\|\kappa\|_\infty^{-1}\quad\Longrightarrow\quad J(s,n):=1+\kappa(s)n\ge 1-\rho\|\kappa\|_\infty>0 \ \text{on }\Pi_\rho.
\]
(Injectivity of normal rays also holds for all sufficiently small \(\rho\); we henceforth fix such a \(\rho>a\).)
The physical bent strip of half–width \(a\) is
\[
\Omega_a:=\{\mathscr X(s,n):\ |n|<a\}\subset\Omega_\rho,
\]
and we refer to \(\Omega_\rho\) as the embedded superstrip (the wider tubular neighborhood used for test functions and cutoffs).

Differentiating \(\mathscr X\) and using the Frenet identities yields
\[
\partial_s\mathscr X=(1+\kappa n)\,\tau,\qquad \partial_n\mathscr X=\nu.
\]
Hence the coordinate lines are orthogonal and the metric \(g=D\mathscr X^\top D\mathscr X\) reads
\begin{equation*}\label{eq:metric}
g_{ss}(s,n)=(1+\kappa(s)n)^2,\qquad g_{sn}(s,n)=0,\qquad g_{nn}(s,n)=1,
\end{equation*}
with inverse
\[
g^{ss}(s,n)=(1+\kappa(s)n)^{-2},\qquad g^{sn}=0,\qquad g^{nn}=1,
\]
and Jacobian
\[
\det D\mathscr X(s,n)=1+\kappa(s)n=J(s, n).
\]
Consequently, for smooth \(u=u\circ\mathscr X\),
\[
|\nabla_x u|^2 = g^{ss}|\partial_s u|^2 + g^{nn}|\partial_n u|^2
= J^{-2} |\partial_s u|^2+|\partial_n u|^2,
\]
and the area element is \(\,dx=J\,dsdn\).

On \(\Omega_\rho\times\R_+\) we consider the weighted measure \(y^{1-2\alpha}\,dxdy\).
The pullback by \(\mathscr X\) followed by multiplication by \(J^{1/2}\) defines the \emph{unitary flattening}
\begin{equation*}\label{eq:unitary}
\widetilde W(s,n,y)=J(s,n)^{1/2}\,W(\mathscr X(s,n),y),
\end{equation*}
which is an isometry
\[
\int_{\Omega_\rho} \int_0^\infty y^{1-2\alpha}|W|^2\,dx dy
=\int_{\Pi_\rho}\int_0^\infty y^{1-2\alpha}|\widetilde W|^2\,ds dn dy.
\]
We will use test extensions supported in \(\Omega_\rho\times\R_+\), equivalently \(\widetilde W\) with compact support in \(s\) and \(y\), and with homogeneous Dirichlet condition \(\widetilde W(\cdot,\pm\rho,\cdot)=0\) at the artificial boundary of the superstrip.

\section{CS energy in the embedded superstrip and unitary flattening}
\label{sec:CSenergy}

\begin{proposition}
Let \(\widetilde W\in C_c^\infty(\Pi_\rho \times[0,\infty))\) with \(\widetilde W(\cdot,\pm\rho,\cdot)=0\).
Then
\begin{equation}\label{eq:square}
\begin{aligned}
c_\alpha\int_0^\infty\int_{\Omega_\rho} y^{1-2\alpha}|\nabla W|^2\,dx\,dy
= c_\alpha \int_0^\infty\int_{\Pi_\rho} y^{1-2\alpha}\Bigg(
&J^{-2}\big|\partial_s\widetilde W+A\,\widetilde W\big|^2
+\big|\partial_n\widetilde W\big|^2 \\
&\qquad\qquad\qquad +\,|\partial_y\widetilde W|^2 + V |\widetilde W|^2\Bigg)\,dn\,ds\,dy,
\end{aligned}
\end{equation}
with
\[
A(s,n)=-\frac{n \kappa'(s)}{2 J(s,n)},\qquad 
V(s,n)=-\frac{|\kappa(s)|^2}{4 |J(s,n)|^2}.
\]
\end{proposition}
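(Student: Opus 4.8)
The plan is to compute the pullback of the weighted Dirichlet integral under the normal coordinate map $\mathscr X$ and then absorb the Jacobian factor $J$ into the unknown via the substitution $\widetilde W = J^{1/2}W$, tracking carefully how the first-order derivative terms transform. Starting from the identity $|\nabla_x W|^2 = J^{-2}|\partial_s W|^2 + |\partial_n W|^2$ established in Section~\ref{sec:geometry}, and adding the transversal term $|\partial_y W|^2$ (the $y$-variable is untouched by $\mathscr X$), the integral becomes $\int y^{1-2\alpha}\bigl(J^{-2}|\partial_s W|^2 + |\partial_n W|^2 + |\partial_y W|^2\bigr)J\,ds\,dn\,dy$. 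The $y$-term and the area element $J\,ds\,dn\,dy$ together are exactly what the unitary flattening is designed to handle: writing $W = J^{-1/2}\widetilde W$ we have $\partial_y W = J^{-1/2}\partial_y\widetilde W$, so $|\partial_y W|^2 J = |\partial_y\widetilde W|^2$, and similarly for the $\partial_n$ term one must be slightly more careful since $J$ depends on $n$.

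The heart of the computation is the $\partial_s$ term. We have $\partial_s W = \partial_s(J^{-1/2}\widetilde W) = J^{-1/2}\partial_s\widetilde W + \widetilde W\,\partial_s(J^{-1/2})$. Since $\partial_s J = \kappa'(s)\,n$, we get $\partial_s(J^{-1/2}) = -\tfrac12 J^{-3/2}\kappa' n = J^{-1/2}\cdot\bigl(-\tfrac{n\kappa'}{2J}\bigr) = J^{-1/2}A$, which is precisely the definition of $A$. Hence $\partial_s W = J^{-1/2}(\partial_s\widetilde W + A\widetilde W)$, so $J^{-2}|\partial_s W|^2 J = J^{-2}\cdot J^{-1}|\partial_s\widetilde W + A\widetilde W|^2\cdot J = J^{-2}|\partial_s\widetilde W + A\widetilde W|^2$. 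This already reproduces the first two lines of the claimed right-hand side; the key algebraic input is just $\partial_s J = n\kappa'$ together with $J^{-3/2} = J^{-1/2}J^{-1}$.

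For the $\partial_n$ term, $\partial_n W = J^{-1/2}\partial_n\widetilde W + \widetilde W\,\partial_n(J^{-1/2})$ with $\partial_n J = \kappa(s)$, so $\partial_n(J^{-1/2}) = -\tfrac12 J^{-3/2}\kappa$. Expanding $|\partial_n W|^2 J$ then yields $|\partial_n\widetilde W|^2 + $ a cross term $-\kappa J^{-1}\,\mathrm{Re}(\partial_n\widetilde W\,\overline{\widetilde W}) + \tfrac14\kappa^2 J^{-2}|\widetilde W|^2$. The cross term is of the form $-\tfrac{\kappa}{2J}\partial_n(|\widetilde W|^2)$ (up to handling the complex case, where one works with $\widetilde W$ real or takes real parts), and one integrates it by parts in $n$ over $(-\rho,\rho)$. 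The boundary terms vanish because $\widetilde W(\cdot,\pm\rho,\cdot)=0$, and the interior term produces $\partial_n\bigl(\tfrac{\kappa}{2J}\bigr)|\widetilde W|^2 = \bigl(-\tfrac{\kappa^2}{2J^2}\bigr)|\widetilde W|^2$ since $\partial_n(1/J) = -\kappa/J^2$. Combining this with the leftover $+\tfrac14\kappa^2 J^{-2}|\widetilde W|^2$ from the square gives the net potential $-\tfrac14\kappa^2 J^{-2}|\widetilde W|^2 = V|\widetilde W|^2$. The $y$-integration and the weight $y^{1-2\alpha}$ are inert throughout this integration by parts since it is performed in the $n$-variable.

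The main obstacle — really the only subtle point — is the integration by parts generating the potential $V$: one must be careful that the cross term is an exact $n$-derivative, confirm the boundary contributions at $n=\pm\rho$ vanish (which is why the hypothesis $\widetilde W(\cdot,\pm\rho,\cdot)=0$ is imposed), and correctly combine the resulting term with the square-completion remainder, keeping track of signs so that the two $\kappa^2/J^2$ contributions do not cancel but instead add to $-\tfrac14\kappa^2/J^2$. A minor bookkeeping point is that, strictly, the computation is cleanest for real-valued $\widetilde W$; for complex-valued test functions one writes $2\,\mathrm{Re}(\partial_n\widetilde W\,\overline{\widetilde W}) = \partial_n|\widetilde W|^2$ and the argument goes through verbatim, or one simply notes that the quadratic form decomposes over real and imaginary parts. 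Everything else is the routine chain-rule bookkeeping indicated above, and the factor $c_\alpha$ rides along unchanged on both sides.
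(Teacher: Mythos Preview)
Your proposal is correct and follows essentially the same route as the paper: pull back by $\mathscr X$, substitute $W=J^{-1/2}\widetilde W$, compute $\partial_s(J^{-1/2})=J^{-1/2}A$ to get the $s$-term, and for the $n$-term expand the square and integrate the cross term $-\tfrac{\kappa}{2J}\partial_n|\widetilde W|^2$ by parts using $\widetilde W(\cdot,\pm\rho,\cdot)=0$ and $\partial_n(\kappa/J)=-\kappa^2/J^2$ to produce $V$. The only cosmetic difference is that the paper writes the cross term via the identity $\tfrac{\kappa}{J}\widetilde W\,\partial_n\widetilde W=\tfrac12\partial_n\bigl(\tfrac{\kappa}{J}\widetilde W^2\bigr)+\tfrac{\kappa^2}{2J^2}\widetilde W^2$ before integrating, whereas you differentiate $\kappa/J$ explicitly after integrating by parts; the computations are equivalent.
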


\begin{remark}
The curvature produces the attractive potential \(V\le 0\). Outside the bending region \(|s|>\ell\) we have \(\kappa\equiv0\), hence \(A=0\) and \(V=0\), and the integrand coincides with the straight one.
\end{remark}

\begin{proof}
Let \(\overline{W}(s, n, y):=W(\mathscr X(s,n),y)=J^{-1/2}\widetilde W(s,n,y)\) and write
\[
\int_{\Omega_\rho} |\nabla_x W|^2\,dx
=\int_{\Pi_\rho} \Big[g^{ss}|\partial_s \overline{W}|^2+g^{nn}|\partial_n \overline{W}|^2\Big] J\,dsdn
=\int_{\Pi_\rho}\Big[J^{-2}|\partial_s \overline{W}|^2+|\partial_n \overline{W}|^2\Big] J\,dsdn.
\]
Moreover,
\[
\int_{\Omega_\rho} |\partial_y W|^2\,dx
=\int_{\Pi_\rho} |\partial_y \overline{W}|^2 J\,dsdn=\int_{\Pi_\rho} |\partial_y \widetilde{W}|^2 \,dsdn.
\]

For the \(s\)- and \(n\)-derivatives we compute
\[
\partial_s \overline{W}=\partial_s(J^{-1/2}\widetilde W)
=J^{-1/2}\Big(\partial_s\widetilde W-\frac{n \kappa'}{2J}\widetilde W\Big)
=J^{-1/2}\big(\partial_s\widetilde W+A \widetilde W\big),
\]
\[
\partial_n \overline{W}=\partial_n(J^{-1/2}\widetilde W)
=J^{-1/2}\Big(\partial_n\widetilde W-\frac{\kappa}{2J}\widetilde W\Big).
\]
Expand the \(n\)-term and integrate by parts in \(n\) (with \(s,y\) as parameters). Using
\[
\frac{\kappa}{J} \widetilde{W} \partial_n \widetilde{W} =
\frac{1}{2}\partial_n\Big(\frac{\kappa}{J}\widetilde{W}^2\Big)
+\frac{\kappa^2}{2J^2} \widetilde{W}^2 \,
\]
the boundary condition \(\widetilde W(\cdot,\pm\rho,\cdot)=0\), we obtain
\[
\int_{-\rho}^{\rho}\Big|\partial_n\widetilde W-\frac{\kappa}{2J}\widetilde W\Big|^2dn
=\int_{-\rho}^{\rho}\Big(|\partial_n\widetilde W|^2 
-\frac{\kappa}{J} \partial_n\widetilde W \cdot \widetilde{W}
+\frac{\kappa^2}{4J^2}|\widetilde W|^2\Big) \,dn
\]
\[
=\int_{-\rho}^{\rho}\Big(|\partial_n\widetilde W|^2 
-\frac{1}{2}\partial_n\Big(\frac{\kappa}{J}\widetilde{W}^2\Big)
-\frac{\kappa^2}{4J^2}|\widetilde W|^2\Big)dn
=\int_{-\rho}^{\rho}\Big(|\partial_n\widetilde W|^2 
-\frac{\kappa^2}{4J^2}|\widetilde W|^2\Big)dn.
\]
Collecting the \(s\)-, \(n\)-, and \(y\)-contributions and inserting the common weight \(y^{1-2\alpha}\) proves \eqref{eq:square}.
\end{proof}

\section{Existence of an eigenvalue below $\lambda_1$}
\label{sec:existence}
We construct a quasimode with Rayleigh quotient strictly below $\lambda_1$. Fix an even function $\chi\in C_c^\infty(\R)$ with
\begin{equation}\label{eq:zeta-profile}
  0\le \chi\le 1,\qquad
  \chi\equiv 1  \text{ on } [-1/2,1/2],\qquad
  \supp\chi\subset (-1,1).
\end{equation}
For parameters $L\ge 1$ and $\rho>a>0$ define the rescaled cutoffs
\begin{equation}\label{eq:psi-chi-def}
  \chi_L(s):=\chi (s/L),\qquad
  \chi_\rho(n):=\chi (n/\rho).
\end{equation}
We will assume $\rho\ge 2a$, so that $\chi_\rho\equiv 1$ on $[-a,a]$ and $\chi_\rho(\pm\rho)=0$.
The scaling of derivatives gives the following relations
\begin{equation}\label{eq:psi-chi-scales}
  \|\chi_L\|_{L^2(\R)}^2=L \|\chi\|_{L^2(\R)}^2,\quad
  \|\chi_L'\|_{L^2(\R)}^2=L^{-1} \|\chi'\|_{L^2(\R)}^2,\quad
  \|\chi_\rho'\|_{\infty} = \rho^{-1}\|\chi'\|_{\infty}.
\end{equation}

\medskip

Let $u_1$ be the first Dirichlet eigenfunction on $(-a,a)$. We fix the normalization
\[\int_{-a}^a |u_1(n)|^2\,dn=1.\]
Let $U_1$ denote its Caffarelli–Silvestre extension in $(n,y)\in\R\times(0,\infty)$. We set
\[
  \widetilde W(s,n,y):=\chi_L(s) \chi_\rho(n) U_1(n,y),
  \qquad
  \widetilde w(s,n):=\chi_L(s) \chi_\rho(n) u_1(n)=\chi_L(s) u_1(n).
\]
Since $\chi_\rho\equiv1$ on $[-a,a]$ and $u_1$ is supported in $[-a,a]$, the $L^2$ “mass”
factorizes as
\begin{equation}\label{eq:L2-mass}
  \|\widetilde w\|_{L^2(\Pi_\rho)}^2
  = \Big(\int_{-a}^a |\chi_\rho(n)|^2 |u_1(n)|^2\,dn\Big) \|\chi_L\|_{L^2(\R)}^2
  = \|\chi_L\|_{L^2(\R)}^2 
  = L^{2} \|\chi\|_{L^2(\R)}^2.
\end{equation}
For later use, introduce two finite values
\begin{align}\label{eq:K-mass}
  &K(\rho, \chi):=\int_0^\infty \int_{-\rho}^{\rho}
           y^{1-2\alpha} |\chi_\rho(n)|^2 |U_1(n,y)|^2\,dn dy \in (0,\infty), \\
           \label{eq:K'-mass}
  &K'_\rho(\rho, \chi):=\int_0^\infty \int_{-\rho}^{\rho}
           y^{1-2\alpha} |\chi_\rho'(n)|^2 |U_1(n,y)|^2\,dn dy \in (0,\infty).
\end{align}

\paragraph{(i) Upper bound for the $s$–term.}
On the set $\{|n|\le \rho\}$ we have
\begin{equation}\label{eq:J-A-rho}
  J^{-2}(s,n)\le \bigl(1-\rho\|\kappa\|_\infty\bigr)^{-2}=: J_+(\rho),\quad
  |A(s,n)|\le \frac{\rho}{2\,(1-\rho\|\kappa\|_\infty)} |\kappa'(s)|=:C_A(\rho)|\kappa'(s)|.
\end{equation}
Thus, using Young’s inequality, \eqref{eq:J-A-rho}, and \eqref{eq:psi-chi-scales},
\begin{align}
\mathcal{I}_s
&:=c_\alpha \int_0^\infty\int_{\Pi_\rho}
   y^{1-2\alpha}J^{-2}\bigl|\partial_s\widetilde{W}+A \widetilde{W}\bigr|^2
   \,dndsdy \nonumber\\
&\le 2c_\alpha J_+(\rho) \int_0^\infty \int_{\Pi_\rho}
   y^{1-2\alpha}|\chi_\rho|^2 |U_1|^2
   \Big(|\chi_L'|^2 + |A|^2 |\chi_L|^2\Big)\,dn ds dy \nonumber\\
&\le 2c_\alpha J_+(\rho) K(\rho, \chi)
   \Big(\|\chi_L'\|_{L^2(\R)}^2
       + |C_A(\rho)|^2 \int_{\R} |\kappa'(s)|^2\,ds\Big),
\label{eq:S-bound-unified}
\end{align}
where $K(\rho, \chi)$ is defined in \eqref{eq:K-mass}. We used that $\supp\kappa'\subset[-\ell,\ell]$
and $\chi_L\equiv1$ on $[-\ell,\ell]$ for $L\ge 2\ell$.

Dividing by the mass identity \eqref{eq:L2-mass}, and using \eqref{eq:psi-chi-scales} gives, for $L\ge 2\ell$,
\begin{equation}\label{eq:S-final-unified}
  \frac{\mathcal{I}_s}{\|\widetilde{w}\|_{L^2(\Pi_\rho)}^2}
   \leq \frac{A_0(\rho,\chi)}{L^2}
        + \frac{B_0(\rho,\chi)}{L},
\end{equation}
with
\[
  A_0(\rho, \chi):=\frac{2c_\alpha J_+(\rho)K(\rho, \chi)}{\|\chi\|_{L^2}^2} \|\chi'\|_{L^2}^2,
  \qquad
  B_0(\rho, \chi):=\frac{2c_\alpha J_+(\rho)C_A(\rho)^2K(\rho, \chi)}{\|\chi\|_{L^2}^2} \|\kappa'\|_{L^2(-\ell,\ell)}^2.
\]

\paragraph{(ii) Localized CS identity in $(n,y)$.}
Let $\varphi(s,n):=\chi_L(s)\chi_\rho(n)$. Since $U_1=U_1(n,y)$ is independent of $s$, we use the weak formulation of the extension with the test function $U_1 \varphi^2$:
\begin{equation}\label{eq:cs-test}
c_\alpha \int_0^\infty \int_{\R}
  y^{1-2\alpha}\nabla_{n,y}U_1\cdot\nabla_{n,y} \big(U_1\varphi^2\big)\,dndy
  = \lambda_1 \int_{\R} |u_1(n)|^2 |\varphi(n,s)|^2\,dn.
\end{equation}
Because $\nabla_{n,y}\varphi=(\chi_L\,\chi_\rho',\,0)$, expanding the left-hand side of
\eqref{eq:cs-test} gives, for each $s$,
\[
  c_\alpha \int y^{1-2\alpha}\varphi^2\big(|\partial_n U_1|^2+|\partial_y U_1|^2\big)\,dndy
  + 2c_\alpha \int y^{1-2\alpha}\chi_L^2 \chi_\rho \chi_\rho' U_1 \partial_n U_1\,dn dy
  = \lambda_1 \int \chi_L^2\chi_\rho^2 |u_1|^2\,dn.
\]
Integrating in $s$ and recalling $\partial_n\widetilde{W}=\chi_L(\chi_\rho\partial_n U_1+\chi_\rho' U_1)$,
$\partial_y\widetilde{W}=\chi_L\chi_\rho\partial_y U_1$, we obtain the identity
\begin{equation}\label{eq:IMS-weighted}
\begin{aligned}
 \mathcal{I}_{ny}:= &c_\alpha \int_0^\infty \int_{\Pi_\rho}
   y^{1-2\alpha}\Big(|\partial_n\widetilde{W}|^2+|\partial_y\widetilde{W}|^2\Big)\,dn ds dy
   - \lambda_1 \int_{\Pi_\rho}|\widetilde{w}|^2\,dn ds \\
  &= c_\alpha  \int_{\R}|\chi_L(s)|^2\,ds
  \cdot \int_0^\infty \int_{-\rho}^{\rho}
   y^{1-2\alpha} |\chi_\rho'(n)|^2 |U_1(n,y)|^2\,dn dy.
\end{aligned}
\end{equation}
Using \eqref{eq:K'-mass} we can rewrite \eqref{eq:IMS-weighted} as
\[
  c_\alpha \int y^{1-2\alpha}\big(|\partial_n\widetilde{W}|^2+|\partial_y\widetilde{W}|^2\big)
  - \lambda_1 \int |\widetilde{w}|^2
  = c_\alpha \|\chi_L\|_{L^2(\R)}^2 K'(\rho, \chi).
\]
Dividing by mass identity and keeping in mind that $\supp\,\chi_\rho'\subset\{\rho/2<|n|<\rho\}$, we have the bound
\begin{equation}\label{eq:E-rho-bound}
 \frac{\mathcal{I}_{n,y}}{\|\widetilde{w}\|_{L^2(\Pi_\rho)}^2}=c_\alpha K'(\rho, \chi)
  \leq c_\alpha \frac{\|\chi'\|_{\infty}^2}{\rho^2}
  \int_0^\infty \int_{\rho/2<|n|<\rho} y^{1-2\alpha} |U_1(n,y)|^2\,dn dy.
\end{equation}

\paragraph{(iii) Curvature gain.}
From $J(s,n)\ge 1-\rho\|\kappa\|_\infty$ on $|n|\le\rho$,
\begin{align}
\mathcal{I}_V
&:= c_\alpha \int_0^\infty \int_{\Pi_\rho}
   y^{1-2\alpha} V(s,n) |\widetilde{W} |^2\,dn ds dy \nonumber\\
&= -\frac{c_\alpha}{4} \int_{\R} |\kappa(s)|^2 |\chi_L(s)|^2
      \left(\int_{-\rho}^{\rho}\frac{|\chi_\rho(n)|^2}{|J(s,n)|^2}
            \int_0^\infty y^{1-2\alpha}|U_1(n,y)|^2\,dy dn\right) ds \nonumber\\
&\le -\frac{c_\alpha}{4(1+\rho\|\kappa\|_\infty)^2}
    \Big( \int_0^\infty \int_{-\rho}^{\rho} y^{1-2\alpha}|\chi_\rho(n)|^2 |U_1(n,y)|^2\,dn dy\Big)
    \int_{\R}|\kappa(s)|^2 |\chi_L(s)|^2\,ds \nonumber\\
&= -\frac{c_\alpha K(\rho, \chi)}{4(1+\rho\|\kappa\|_\infty)^2}
    \int_{-\ell}^{\ell}|\kappa(s)|^2\,ds,
\label{eq:V-final-unified}
\end{align}
where $K(\rho, \chi)$ is defined in \eqref{eq:K-mass}.
Dividing by the mass identity \eqref{eq:L2-mass}, we obtain
\begin{equation}\label{eq:V-over-mass-unified}
  \frac{\mathcal{I}_V}{\|\widetilde{w}\|_{L^2(\Pi_\rho)}^2}
   \le - \frac{C_0'(\rho,\chi)}{L},\quad
  C_0'(\rho,\chi):=\frac{c_\alpha K(\rho, \chi)}{4 \|\chi\|_{L^2}^2\,(1+\rho\|\kappa\|_\infty)^2} \|\kappa\|_{L^2(-\ell,\ell)}^2.
\end{equation}

\paragraph{(iv) Rayleigh quotient and choice of parameters.}
Combining the estimates we get, for $L\ge 2\ell$,
\begin{equation}\label{eq:RQ-master-unified}
  \mathcal Q[\widetilde{W}]-\lambda_1 \le  \frac{A_0(\rho,\chi)}{L^2}
       + \frac{B_0(\rho,\chi)}{L}
       + c_\alpha K_\rho' (\rho, \chi)
       - \frac{C_0'(\rho,\chi)}{L}.
\end{equation}

Because $C_0'(\rho,\chi)$ and $B_0(\rho,\chi)$ share the same factor
$K(\rho, \chi)/\|\chi\|_{L^2}^2$, the inequality $C_0'(\rho,\chi)>B_0(\rho,\chi)$ is
equivalent to a $\kappa$–only condition:
\begin{equation}\label{eq:SB-criterion}
 \|\kappa\|_{L^2(-\ell,\ell)}^2 >
 \frac{8 J_+(\rho) C_A(\rho)^2}{(1+\rho\|\kappa\|_\infty)^2}\,
 \|\kappa'\|_{L^2(-\ell,\ell)}^2.
\end{equation}
This becomes
\begin{equation}\label{eq:SB-explicit}
 \frac{\|\kappa\|_{L^2(-\ell,\ell)}}{\|\kappa'\|_{L^2(-\ell,\ell)}}
 > \sqrt{2} \rho \frac{1+\rho\|\kappa\|_\infty}{(1-\rho\|\kappa\|_\infty)^2}
\end{equation}
under the geometric constraint $\rho\|\kappa\|_\infty<1$.

\paragraph{(v) Estimates in case $\alpha\in(0, 1/2)$.}
The only change is that the integrals $K(\rho,\chi)$ and $K_\rho'(\rho, \chi)$
can be infinite. To prevent this we set
\[
  \widetilde W(s,n,y):=\chi_\tau(y) \chi_L(s) \chi_\rho(n) U_1(n,y),
  \qquad
  \widetilde w(s,n):=\chi_\tau(y) \chi_L(s) \chi_\rho(n) u_1(n)=\chi_L(s) u_1(n),
\]
where 
\begin{equation}\label{eq:tau-chi-def}
  \chi_\tau(s):=\chi (y/\tau).
\end{equation}
Hence $\nabla_{n,y}\varphi=(\chi_L\,\chi_\tau\chi_\rho',\,\chi_L\,\chi_\tau'\chi_\rho)$ and thus

\begin{equation*}
 \mathcal{I}_{ny}
  = c_\alpha  \int_{\R}|\chi_L(s)|^2\,ds
  \cdot \int_0^\infty \int_{-\rho}^{\rho}
   y^{1-2\alpha} (|\chi_\rho'(n)\chi_\tau(y)|^2 + |\chi_\rho(n)\chi_\tau'(y)|^2)|U_1(n,y)|^2\,dn dy,
\end{equation*}
\begin{equation*}
 \frac{\mathcal{I}_{n,y}}{\|\widetilde{w}\|_{L^2(\Pi_\rho)}^2}=c_\alpha (K_\rho'(\rho, \chi) + K_\tau'(\rho, \chi)),
\end{equation*}
where 
\begin{align}\label{eq:K_tau-mass}
  &K(\rho, \tau, \chi):=\int_0^\infty \int_{-\rho}^{\rho}
           y^{1-2\alpha} |\chi_\tau(y)\chi_\rho(n)|^2 |U_1(n,y)|^2\,dn dy \in (0,\infty), \\
           \label{eq:K_rho'-mass}
  &K'_\rho(\rho,\tau, \chi):=\int_0^\infty \int_{-\rho}^{\rho}
           y^{1-2\alpha} |\chi_\tau(y)\chi_\rho'(n)|^2 |U_1(n,y)|^2\,dn dy \in (0,\infty), \\
           \label{eq:K_tau'-mass}
    &K'_\tau(\rho, \tau, \chi):=\int_0^\infty \int_{-\rho}^{\rho}
           y^{1-2\alpha} |\chi_\tau'(y)\chi_\rho(n)|^2 |U_1(n,y)|^2\,dn dy \in (0,\infty).
\end{align}
 According to the definition we have
$U_1(\cdot, y) = {\mathcal P}_s(\cdot, y) * u_1$. The Young convolution inequality provides the estimate
\begin{equation*}
    \|U_1(\cdot, y); L_2(\mathbb R)\|^2 = \|{\mathcal P}_\alpha(\cdot, y) * u_1; L_2(\mathbb R)\|^2\leqslant \|{\mathcal P}_\alpha(\cdot, u_1); L_2(\mathbb R)\|^2  \|u_1; L_1(\mathbb R)\|^2.
\end{equation*}
Note that $u_1$ is summable because it is square summable and has a compact support. Introducing a new variable $\widetilde{x}=x/y$, we write 
\begin{equation*}
     \|{\mathcal P}_\alpha(\cdot, y); L_2(\mathbb R)\|^2  = \int_\mathbb R \frac{y^{4\alpha}\,dx}{(x^2 + y^2)^{1+2s}}= \frac{1}{y}\int_\mathbb R \frac{d \widetilde{x}}{(1 +  \widetilde{x}^2)^{1+2\alpha}},
\end{equation*}
and thus 
\begin{equation}
\label{eq:U_1-norm-est}
    \|U_1(\cdot, y); L_2(\mathbb R)\|^2 \leqslant C y^{-1}.
\end{equation}
Due to \eqref{eq:psi-chi-def}, \eqref{eq:tau-chi-def} and \eqref{eq:U_1-norm-est} the integrals \eqref{eq:K_tau-mass} - \eqref{eq:K_tau'-mass} are finite and, moreover, the last integral can be estimated as follows
\begin{equation}\label{eq:E-tau-bound}
K_\tau'(\rho, \tau, \chi) \leq C\int_0^{+\infty}|\chi_\tau'(y)|^2y^{-2\alpha}dy
  \leq \widetilde{C} \frac{\|\chi'\|_{\infty}^2}{\tau^{1+2\alpha}}.
\end{equation}
We update the estimate \eqref{eq:RQ-master-unified}:
\begin{equation}\label{eq:RQ-master-unified-2}
  \mathcal Q[\widetilde{W}]-\lambda_1 \le  \frac{A_0(\rho,\tau,\chi)}{L^2}
       + \frac{B_0(\rho,\tau,\chi)}{L}
       + c_\alpha K_\rho' (\rho, \tau, \chi) + \widetilde{c_\alpha} K_\tau'(\rho, \tau, \chi)
       - \frac{C_0'(\rho, \tau, \chi)}{L}.
\end{equation}
Since, choosing $\tau$ large enough, the term $\widetilde{c_\alpha} K_\tau'(\rho, \tau, \chi)$ can be made arbitrary small, the choice of other parameters can be made as in (iv).
\paragraph{(vi) Estimates in case $\alpha=1/2$.}
Since $U_1(x, y) = {\mathcal P}(\cdot, y) * u_1(x)$ and the Fourier transform preserves $L_2$-norm, the following identity is satisfied
\begin{equation*}
\|U_1; L_2(\mathbb R\times(0, \tau))\|^2 = 
\int_0^{\tau}\int_{-\infty}^\infty |{\mathcal F}_1{\mathcal P}_{1/2}(\cdot, y)(\xi)|^2|{\mathcal F}_1 u_1(\xi)|^2\,d\xi\,dy.
\end{equation*}
Direct computations show that
\begin{equation*}
{\mathcal F}_1{\mathcal P}_{1/2}(\cdot, y)(\xi) = \sqrt{\frac{\pi}2}e^{-y|\xi|},
\end{equation*}
thus, we have
\begin{equation*}
\|U_1; L_2(\mathbb R\times(0, \tau))\|^2 =  \sqrt{\frac{\pi}2}
\int_0^{\tau}\int_{-\infty}^\infty e^{-2y|\xi|}|{\mathcal F}_1 u_1(\xi)|^2\,d\xi\,dy.
\end{equation*}
Applying the Fubini theorem, we get
\begin{equation*}
\|U_1; L_2(\mathbb R\times(0, \tau))\|^2 = \sqrt{\frac{\pi}2}
\int_{-\infty}^\infty \frac{1 - e^{-2\tau|\xi|}}{2|\xi|}|{\mathcal F}_1 u_1(\xi)|^2\,d\xi.
\end{equation*}
We decompose the last integral into a sum
\begin{equation*}
J_1 + J_2 + J_3 :=
\left(\int_{|\xi|\leqslant \tau^{-1}} + \int_{ \tau^{-1} \leqslant |\xi| \leqslant 1} + \int_{|\xi| \geqslant 1}\right) \frac{1 - e^{-2\tau|\xi|}}{2|\xi|}|{\mathcal F}_1 u_1(\xi)|^2\,d\xi.
\end{equation*}
The function $u_1$ has a compact support, so its Fourier transform is a smooth function. Introducing a new variable $\widetilde\xi = \tau\xi$, we obtain an estimate for $J_1$ 
\begin{equation*}
J_1 \leqslant C\int_{0}^1\frac{1-e^{-2\widetilde\xi}}{\widetilde\xi}d\widetilde\xi,    
\end{equation*}
with a convergent integral on the right-hand side. The other terms satisfy the inequalities
\begin{equation*}
\begin{aligned}
J_2&\leqslant C\int_{\tau^{-1}}^1\frac{d\xi}{\xi}\leqslant C\log(\tau),\\
J_3&\leqslant C\int_1^\infty |{\mathcal F}_1 u_1(\xi)|^2\,d\xi\leqslant C.
\end{aligned}
\end{equation*}
Adding up the estimates for $J_1$, $J_2$, and $J_3$, we obtain
\begin{equation*}
\label{eq:U_1-norm-est_alpha_half}
\|U_1; L_2(\mathbb R\times(0, \tau))\|^2 \leqslant C\log(\tau).
\end{equation*}

Again Due to \eqref{eq:psi-chi-def}, \eqref{eq:tau-chi-def} and \eqref{eq:U_1-norm-est_alpha_half} the integrals \eqref{eq:K_tau-mass} - \eqref{eq:K_tau'-mass} are finite and  the last integral can be estimated as follows
\begin{equation}\label{eq:E-tau-bound2}
K_\tau'(\rho, \tau, \chi) 
  \leq  \frac{\|\chi'\|_{\infty}^2}{\tau^{2}}\|U_1; L_2(\mathbb R\times(0, \tau))\|^2\leq C\frac{\|\chi'\|_{\infty}^2\log(\tau)}{\tau^{2}}.
\end{equation}
Finally, we have an estimate similar to 
\eqref{eq:RQ-master-unified-2}.



\begin{theorem}
    Given any profile function $\kappa$ with $\supp \kappa\subset [-1, 1]$ and $\|\kappa\|_{\infty} = 1$, then there exist $\mathcal L$ and $\mathcal E$ such that for $\ell > \mathcal L$ and $\varepsilon < \mathcal E$  the discrete spectrum of $\mathcal A_\alpha^{\Omega^{\varepsilon,\ell}}$ in a bended waveguide $\Omega^{\varepsilon,\ell}$ of unit width with curvature equal to $\kappa_{\ell,\varepsilon} = \varepsilon\kappa(s/\ell)$  is nonempty.
\end{theorem}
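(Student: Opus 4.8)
The plan is to run the quasimode construction of Section~\ref{sec:existence} for the family $\kappa_{\ell,\varepsilon}(s)=\varepsilon\,\kappa(s/\ell)$. First I would record the scalings forced by $\|\kappa\|_\infty=1$, $\supp\kappa\subset[-1,1]$:
\[
\|\kappa_{\ell,\varepsilon}\|_\infty=\varepsilon,\qquad
\supp\kappa_{\ell,\varepsilon}\subset[-\ell,\ell],\qquad
\|\kappa_{\ell,\varepsilon}\|_{L^2(-\ell,\ell)}^2=\varepsilon^2\ell\,\|\kappa\|_{L^2(-1,1)}^2,\qquad
\|\kappa_{\ell,\varepsilon}'\|_{L^2(-\ell,\ell)}^2=\frac{\varepsilon^2}{\ell}\,\|\kappa'\|_{L^2(-1,1)}^2,
\]
the half-width being $a=\tfrac12$. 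Since the bend is compactly supported, $\inf\sigma_{ess}(\mathcal A_\alpha^{\Omega^{\varepsilon,\ell}})=\lambda_1$, so by the variational characterization recalled at the end of Section~\ref{sec:Caff-Sil} it is enough to produce a test extension $\widetilde W$ with $\mathcal Q[\widetilde W]<\lambda_1$; then $\mathcal A_\alpha^{\Omega^{\varepsilon,\ell}}$ has an eigenvalue below $\lambda_1$, i.e.\ nonempty discrete spectrum.

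The test function is the one of Section~\ref{sec:existence}, $\widetilde W=\chi_L(s)\chi_\rho(n)U_1(n,y)$ (with the extra $y$-cutoff $\chi_\tau(y)$ of steps~(v)--(vi) when $\alpha\le\tfrac12$), and the two auxiliary scales must be tied to the curvature. I would take the transversal cutoff $\rho$ large, of order $\varepsilon^{-1}$, concretely with $\rho\varepsilon$ a fixed constant in $(0,1)$: then $\rho\ge 2a$, the prefactors $J_+(\rho)$, $1+\rho\varepsilon$, $\varepsilon\,C_A(\rho)$ stay bounded, and the truncation constant becomes small --- by \eqref{eq:E-rho-bound} and the pointwise decay of the Poisson kernel (cf.\ the estimates around \eqref{eq:U_1-norm-est}) one gets $c_\alpha K'_\rho\lesssim\rho^{-(1+2\alpha)}$, which is small since $\rho$ is large. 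I would take the longitudinal cutoff $L\ge 2\ell$ (so $\chi_L\equiv1$ on $\supp\kappa_{\ell,\varepsilon}$) and large; for $\alpha\le\tfrac12$ the scale $\tau$ is picked so that the weighted integrals stay finite and $\widetilde{c_\alpha}K'_\tau\lesssim\tau^{-(1+2\alpha)}$ is controlled.

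Then I would substitute into the master inequality \eqref{eq:RQ-master-unified}, resp.\ \eqref{eq:RQ-master-unified-2}. With the scalings above the curvature gain $C_0'/L$ is of order $\varepsilon^2\ell/L$, the competing terms being $A_0/L^2$ (longitudinal-cutoff energy), $B_0/L$ (carrying $\|\kappa'\|$), and $c_\alpha K'_\rho$ (transversal truncation). The decisive split is the one isolated in step~(iv): since $B_0$ and $C_0'$ carry the common factor $K(\rho,\chi)/\|\chi\|_{L^2}^2$, the inequality $C_0'>B_0$ is exactly the $\kappa$-only condition \eqref{eq:SB-explicit}, which for $\kappa_{\ell,\varepsilon}$ reads $\ell\,\|\kappa\|_{L^2(-1,1)}/\|\kappa'\|_{L^2(-1,1)}>\sqrt2\,\rho(1+\rho\varepsilon)(1-\rho\varepsilon)^{-2}$; as $\rho\varepsilon$ is fixed, the right-hand side is a constant, so this holds once $\ell$ exceeds a fixed threshold. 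It remains to beat $A_0/L^2+c_\alpha K'_\rho$ by $(C_0'-B_0)/L$: the term $c_\alpha K'_\rho$ is small for $\varepsilon$ small and absorbed, while $A_0/L^2$ (of order $L^{-2}$) is made small relative to the gain by taking $L$, hence $\ell$, large. Hence for $\varepsilon<\mathcal E$ and $\ell>\mathcal L$ one obtains $\mathcal Q[\widetilde W]<\lambda_1$.

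The step I expect to be the real obstacle is controlling the transversal truncation term $c_\alpha K'_\rho$ --- the genuinely non-local feature. In the classical local waveguide the cross-section mode lives in $|n|<a$, no cutoff in an extra variable is needed, and any nonzero curvature traps a mode for every $\ell$; here the Caffarelli--Silvestre extension $U_1$ spreads over the whole half-plane $(n,y)\in\R\times\R_+$, forcing the cutoff $\chi_\rho(n)$ whose cost does not decay under dilation in $s$ and is small only for $\rho$ large. But $\rho$ is capped by the no-focal-point bound $\rho<\|\kappa_{\ell,\varepsilon}\|_\infty^{-1}=\varepsilon^{-1}$, so $c_\alpha K'_\rho$ cannot be driven below a fixed power of $\varepsilon$, and making it negligible against the order-$\varepsilon^2$ curvature gain is exactly what forces $\varepsilon$ small; dilating the bend (large $\ell$, small amplitude $\varepsilon$) then simultaneously validates \eqref{eq:SB-explicit} and damps the longitudinal-cutoff term. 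Thus the two-parameter normalization $\kappa_{\ell,\varepsilon}=\varepsilon\,\kappa(s/\ell)$, rather than a single curvature parameter, is dictated by the interplay between the focal-point bound on $\rho$ and the non-locality-induced truncation cost; with this construction $\mathcal L$ comes out depending on $\varepsilon$ (growing as $\varepsilon\to0$), so I read the statement as: for each $\varepsilon<\mathcal E$ there is $\mathcal L=\mathcal L(\varepsilon)$ such that the conclusion holds for all $\ell>\mathcal L$.
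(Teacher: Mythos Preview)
Your proposal is correct and follows essentially the same route as the paper: the same trial extension $\chi_L\chi_\rho U_1$ (with the $\chi_\tau$ cutoff for $\alpha\le\tfrac12$), the same coupling $\rho\varepsilon=\theta\in(0,1)$, the reduction of $C_0'>B_0$ to the $\kappa$-only criterion \eqref{eq:SB-explicit} (which becomes $\ell/\rho>\text{const}$), and the suppression of the IMS remainder by taking $\rho$ large. Your explicit decay rate $K'_\rho\lesssim\rho^{-(1+2\alpha)}$ and your closing remark that the argument actually yields $\mathcal L=\mathcal L(\varepsilon)$ are both accurate refinements of what the paper's proof establishes.
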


\begin{proof}
    Fix a single bump $\chi\in C_c^\infty(\R)$ as in \eqref{eq:zeta-profile} and use
$\chi_L(s)=\chi(s/L)$, $\chi_\rho(n)=\chi(n/\rho)$ with the scalings in
\eqref{eq:psi-chi-scales}. 
Set $C_\kappa:=\int_{-1}^1 \kappa(r)^2\,dr$, $C_{\kappa'}:=\int_{-1}^1 |\kappa'(r)|^2\,dr$, 
$\|\kappa_{\ell,\varepsilon}\|_{L^2(-\ell,\ell)}^2=\,\varepsilon^2\ell\,C_\kappa$ and
$\|\kappa_{\ell,\varepsilon}'\|_{L^2(-\ell,\ell)}^2=\,\varepsilon^2\ell^{-1}\,C_{\kappa'}$.
\paragraph{Step 1 (large admissible superstrip and amplitude).}
Pick $\theta\in(0,1)$, choose any $\rho\ge 2$, and set
\[
  \varepsilon:=\frac{\theta}{\rho}\qquad\Longrightarrow\qquad
  \rho\,\|\kappa_{\ell,\varepsilon}\|_\infty=\theta<1.
\]
Then $J_+(\rho)=(1-\theta)^{-2}$ and $C_A(\rho)=\rho/(2(1-\theta))$.

\paragraph{Step 2 (bending scale to ensure $C_0'>B_0$).}
With the above parametrization, the ratio $C_0'/B_0$ computed from
\eqref{eq:S-final-unified} and \eqref{eq:V-over-mass-unified} simplifies to
\[
  \frac{C_0'}{B_0}
  \;=\;\frac{(1-\theta)^4}{2(1+\theta)^2}\cdot\frac{C_\kappa}{C_{\kappa'}}\cdot\frac{\ell^2}{\rho^2}.
\]
Thus $C_0'>B_0$ holds whenever
\begin{equation}\label{eq:ell-over-rho-threshold}
  \frac{\ell}{\rho}\ > \sqrt{\frac{2(1+\theta)^2}{(1-\theta)^4}\cdot\frac{C_{\kappa'}}{C_\kappa}}\;.
\end{equation}

\paragraph{Step 3 (control the $s$–cutoff error).}
Using \eqref{eq:RQ-master-unified}, choose
\[
  L\ \ge\ L_\ast:=\frac{4A_0(\rho,\chi)}{C_0'(\rho,\chi)-B_0(\rho,\chi)}
  \qquad\text{and}\qquad L\ge 2\ell,
\]
so that $\frac{A_0}{L^2}\le \frac{C_0'-B_0}{4L}$ and $\chi_L\equiv1$ on $[-\ell,\ell]$.
Then
\[
  \mathcal Q[\widetilde W_L]-\lambda_1
  \ \le\ -\,\frac{C_0'(\rho,\chi)-B_0(\rho,\chi)}{4L}\ +\ c_\alpha\,\mathcal E_\rho(\chi).
\]

\paragraph{Step 4 (suppress the IMS remainder).}
By \eqref{eq:E-rho-bound},
\[
  \mathcal E_\rho(\chi)\ \le\ \frac{\|\chi'\|_{L^\infty}^2}{\rho^2}
  \int_0^\infty\!\!\!\int_{\rho/2<|n|<\rho} y^{1-2\alpha}\,U_1(n,y)^2\,dn\,dy
  \ \xrightarrow[\rho\to\infty]{}\ 0.
\]
Since $\rho$ was free in Step~1, enlarge $\rho$ (adjusting $\varepsilon=\theta/\rho$ and
$\ell$ to keep \eqref{eq:ell-over-rho-threshold}) so that
\[
  c_\alpha\,\mathcal E_\rho(\chi)\ \le\ \frac{C_0'(\rho,\chi)-B_0(\rho,\chi)}{8L}.
\]
Therefore
\[
  \mathcal Q[\widetilde W_L]-\lambda_1
  \ \le\ -\,\frac{C_0'(\rho,\chi)-B_0(\rho,\chi)}{8L}\ <\ 0,
\]
and  since due to estimates \eqref{eq:E-tau-bound} and \eqref{eq:E-tau-bound2} a reminder term can also be made an arbitrary small a bound state below $\lambda_1$ exists by the variational principle.

\end{proof}

\paragraph*{Acknowledgments.} 
The results were obtained under support of the Russian Science Foundation (RSF) grant 19-71-30002.

\printbibliography

@article{CaSi,
	title = {An {Extension} {Problem} {Related} to the {Fractional} {Laplacian}},
	volume = {32},
	issn = {0360-5302, 1532-4133},
	url = {http://www.tandfonline.com/doi/abs/10.1080/03605300600987306},
	doi = {10.1080/03605300600987306},
	language = {en},
	number = {8},
	urldate = {2022-09-03},
	journal = {Communications in Partial Differential Equations},
	author = {Caffarelli, Luis and Silvestre, Luis},
	month = aug,
	year = {2007},
	pages = {1245--1260},
	file = {Отправленная версия:C\:\\Users\\adm\\Zotero\\storage\\JDR9XZCA\\Caffarelli и Silvestre - 2007 - An Extension Problem Related to the Fractional Lap.pdf:application/pdf},
}

@article{BaNa2023JST,
	title = {Dirichlet fractional Laplacian in multi-tubes},
	issn = {1664-039X},
	url = {https://ems.press/doi/10.4171/JST/458},
	doi = {10.4171/JST/458},
	journal = {Journal of Spectral Theory},
	shortjournal = {J. Spectr. Theory},
	author = {Bakharev, Fedor and Nazarov, Alexander},
	urldate = {2023-09-28},
	date = {2023-09-13},
    year = {2023},
	langid = {english},
	file = {Отправленная версия:C\:\\Users\\adm\\Zotero\\storage\\XWF4HQW9\\Bakharev и Nazarov - 2023 - Dirichlet fractional Laplacian in multi-tubes.pdf:application/pdf}
}

@article{ESS,
	title = {On existence of a bound state in an {{\textsf{L}}}-shaped waveguide},
	volume = {39},
	issn = {0011-4626, 1572-9486},
	url = {http://link.springer.com/10.1007/BF01605319},
	doi = {10.1007/BF01605319},
	language = {en},
	number = {11},
	urldate = {2022-09-03},
	journal = {Czechoslovak Journal of Physics},
	author = {Exner, P. and Šeba, P. and Št'oviček, P.},
	month = nov,
	year = {1989},
	pages = {1181--1191},
}

@article{ExSe89,
  title={Bound states in curved quantum waveguides},
  author={Exner, Pavel and {\v{S}}eba, Petr},
  journal={Journal of Mathematical Physics},
  volume={30},
  number={11},
  pages={2574--2580},
  year={1989},
  publisher={American Institute of Physics},
  language = {english}, langid = {english},
  keywords = {english, othercites},
  doi = {10.1063/1.528538}
}

@article{Valdinoci2009,
  title={From the long jump random walk to the fractional Laplacian},
  author={Valdinoci, Enrico},
  journal={arXiv preprint arXiv:0901.3261},
  year={2009}
}

@article{DuEx1995,
	title = {Curvature-induced bound states in quantum waveguides in two and three dimensions},
	volume = {07},
	issn = {0129-055X, 1793-6659},
	url = {https://www.worldscientific.com/doi/abs/10.1142/S0129055X95000062},
	doi = {10.1142/S0129055X95000062},
	abstract = {Dirichlet Laplacian on curved tubes of a constant cross section in two and three dimensions is investigated. It is shown that if the tube is non-straight and its curvature vanishes asymptotically, there is always a bound state below the bottom of the essential spectrum. An upper bound to the number of these bound states in thin tubes is derived. Furthermore, if the tube is only slightly bent, there is just one bound state; we derive its behaviour with respect to the bending angle. Finally, perturbation theory of these eigenvalues in any thin tube with respect to the tube radius is constructed and some open questions are formulated.},
	language = {en},
	number = {01},
	urldate = {2022-09-03},
	journal = {Reviews in Mathematical Physics},
	author = {Duclos, P. and Exner, P.},
	month = jan,
	year = {1995},
	pages = {73--102},
}

@article{GoJa1992,
	title = {Bound states in twisting tubes},
	volume = {45},
	issn = {0163-1829, 1095-3795},
	url = {https://link.aps.org/doi/10.1103/PhysRevB.45.14100},
	doi = {10.1103/PhysRevB.45.14100},
	language = {en},
	number = {24},
	urldate = {2022-09-03},
	journal = {Physical Review B},
	author = {Goldstone, J. and Jaffe, R. L.},
	month = jun,
	year = {1992},
	pages = {14100--14107},
}

@article{Nardini86,
	title = {Exponential decay for the eigenfunctions of the two body relativistic hamiltonian},
	volume = {47},
	issn = {0021-7670, 1565-8538},
	url = {http://link.springer.com/10.1007/BF02792534},
	doi = {10.1007/BF02792534},
	language = {en},
	number = {1},
	urldate = {2022-09-03},
	journal = {Journal d'Analyse Mathématique},
	author = {Nardini, Franco},
	month = dec,
	year = {1986},
	pages = {87--109},
}

@article{CaMaSi90,
	title = {Relativistic {Schrödinger} operators: {Asymptotic} behavior of the eigenfunctions},
	volume = {91},
	issn = {00221236},
	shorttitle = {Relativistic {Schrödinger} operators},
	url = {https://linkinghub.elsevier.com/retrieve/pii/002212369090049Q},
	doi = {10.1016/0022-1236(90)90049-Q},
	language = {en},
	number = {1},
	urldate = {2022-09-03},
	journal = {Journal of Functional Analysis},
	author = {Carmona, René and Masters, Wen Chen and Simon, Barry},
	month = jun,
	year = {1990},
	pages = {117--142},
	file = {Отправленная версия:C\:\\Users\\adm\\Zotero\\storage\\XJQARNIP\\Carmona и др. - 1990 - Relativistic Schrödinger operators Asymptotic beh.pdf:application/pdf},
}

@article{GaSt19,
	title = {Fractional {Laplacians} in bounded domains: Killed, reflected, censored, and taboo {Lévy} flights},
	volume = {99},
	issn = {2470-0045, 2470-0053},
	shorttitle = {Fractional {Laplacians} in bounded domains},
	url = {https://link.aps.org/doi/10.1103/PhysRevE.99.042126},
	doi = {10.1103/PhysRevE.99.042126},
	language = {en},
	number = {4},
	urldate = {2022-09-03},
	journal = {Physical Review E},
	author = {Garbaczewski, Piotr and Stephanovich, Vladimir},
	month = apr,
	year = {2019},
	pages = {042126},
	file = {Отправленная версия:C\:\\Users\\adm\\Zotero\\storage\\XMIR6WLM\\Garbaczewski и Stephanovich - 2019 - Fractional Laplacians in bounded domains Killed, .pdf:application/pdf},
}

@article{MO,
	title = {Symmetric Stable Processes as Traces of Degenerate Diffusion Processes},
	volume = {14},
	issn = {0040-585X, 1095-7219},
	url = {http://epubs.siam.org/doi/10.1137/1114012},
	doi = {10.1137/1114012},
	language = {en},
	number = {1},
	urldate = {2022-09-03},
	journal = {Theory of Probability \& Its Applications},
	author = {Molchanov, S. A. and Ostrovskii, E.},
	month = jan,
	year = {1969},
	pages = {128--131},
}

@book{EK,
	address = {Cham Heidelberg},
	series = {Theoretical and mathematical physics},
	title = {Quantum waveguides},
	isbn = {978-3-319-18575-0},
	language = {eng},
	publisher = {Springer},
	author = {Exner, Pavel and Kovařík, Hynek},
	year = {2015},
	doi = {10.1007/978-3-319-18576-7},
	annote = {Literaturverz. S. 363 - 378 und Index},
	file = {Table of Contents PDF:C\:\\Users\\adm\\Zotero\\storage\\Y4Z763XK\\Exner и Kovařík - 2015 - Quantum waveguides.pdf:application/pdf},
}

@article{BaMa24,
	title = {Fractional Laplacian in V-shaped waveguide},
	url = {arXiv preprint arXiv:2405.16892},
        author = {Bakharev, Fedor and Matveenko, Sergey},
        year = {2024},
}

\end{document}